\renewcommand{\@noticestring}{} % remove footer footnote text
\newtheoremstyle{spacedstyle} 
  {10pt} {10pt} {\itshape} {} {\bfseries} {.} { } {}
\theoremstyle{spacedstyle}
\newtheorem{theorem}{Theorem}[section]
\newtheorem{lemma}[theorem]{Lemma}
\newtheorem{proposition}[theorem]{Proposition}
\newtheorem{definition}[theorem]{Definition}
\title{Greedy Selection under Independent Increments: A Toy Model Analysis}
\author{
  Huitao Yang \\
  Fudan University \\
  \texttt{htyang21@m.fudan.edu.cn} \\
}
\begin{document}
\maketitle

\begin{abstract}
We study an iterative selection problem over $N$ i.i.d. discrete-time stochastic processes with independent increments. At each stage, a fixed number of processes are retained based on their observed values. Under this simple model, we prove that the optimal strategy for selecting the final maximum-value process is to apply greedy selection at each stage. While the result relies on strong independence assumptions, it offers a clean justification for greedy heuristics in multi-stage elimination settings and may serve as a toy example for understanding related algorithms in high-dimensional applications.
\end{abstract}

\section{Related Work}

Greedy strategies have been extensively analyzed in combinatorial optimization and submodular maximization, where they often offer provable approximation guarantees \cite{nemhauser1978analysis}. In the context of stochastic processes, optimal stopping problems and secretary-style selection problems also study decision-making with limited information \cite{ferguson1989solved}, although they typically involve single-item selection rather than staged elimination.

This problem bears conceptual resemblance to algorithmic strategies in large language model (LLM) alignment. Techniques such as \emph{Best-of-N sampling} and \emph{speculative decoding} \cite{bestofn,lee2023speculative} involve generating and filtering multiple candidate trajectories with partial information, aiming to retain high-quality outputs. \emph{Speculative Rejection}\cite{sun2024fastbestofndecodingspeculative} is especially related where responses are iteratively eliminated based on their partial scores. While these strategies typically operate in high-dimensional latent spaces, they implicitly rely on the same idea of preserving top candidates across stages. Our result provides theoretical grounding for greedy selection in simplified models, which may inform the analysis of these heuristics under stochastic process assumptions.

\section{Problem Setting}

We consider $N$ i.i.d. discrete-time stochastic processes 
\[
X_1(t), \ldots, X_N(t), \quad t = 0, 1, \ldots, T,
\]
with $X_i(0) = 0$ and independent increments. That is, for each $i$, the increments $X_i(t+1) - X_i(t)$ are independent across $t$.

Our goal is to identify the index
\[
i_* = \arg\max_{i \in [N]} X_i(T),
\]
without observing full trajectories. Instead, selection is done iteratively by filtering subsets of processes at intermediate times. We formalize this below.

\begin{definition}[Iterative Selection Algorithm]
Let $0 < t_1 < \cdots < t_k = T$ be observation times, and $N > n_1 > \cdots > n_k = 1$ be subset sizes. For fixed realization $\omega \in \Omega$,  an \emph{iterative selection algorithm} consists of $k$ maps $\{f_j\}_{j=1}^k$ such that:
\begin{itemize}
  \item $f_1$ selects $n_1$ indices from all observed histories up to $t_1$:
  \[ f_1: \mathcal{H}_1: =  \{X_i(t)\}_{i\in[N],\, t\le t_1} \mapsto \Sigma_1 \subset [N], \quad |\Sigma_1| = n_1. \]

  \item For $j > 1$, $f_j$ selects $n_j$ indices from updated histories over surviving indices:
  \[ f_j:  \mathcal{H}_i\mapsto \Sigma_j \subset \Sigma_{j-1}, \quad \mathcal{H}_i = \mathcal{H}_{(i-1)} \cup \{X_i(t)\}_{i\in\Sigma_{j-1}, t_{j-1}<t\le t_j},  \quad |\Sigma_j| = n_j. \]

  The final output is the single element $i_* \in \Sigma_k$.
\end{itemize}
\end{definition}

We denote an iterative selection algorithm as a map 
  \[\text{Alg} : \{X_n(t)\}_{n \in \mathbb{N},  t \leq T} \in \mathbb{R}^{N \times T} \mapsto X_{i^*}(T) \in \mathbb{R}, \]

  though $\text{Alg}$ only relies on a path subspace with shape aligned with $\mathcal{H}_k$.  

\section{Greedy Selection and Alignment}

\begin{definition}[Greedy Selection Algorithm]
The \emph{greedy algorithm} selects the top $n_j$ processes by their observed values at each $t_j$ among the currently retained set. Denote these selected sets as $\Sigma_j^*$.
Mathematically, 
\[\Sigma_j^* = \{n \in \Sigma^*_{(j-1)} ; X_n(t_j) \text{ is among the highest } n_j \}.\]
\end{definition}

\begin{definition}[Temporal Index System]
Fix an iterative selection algorithm and consider a single realization (sample path) of all processes. Define:
\begin{itemize}
  \item $\Sigma_j$ as the selected set at time $t_j$;
  \item $W_j := \Sigma_j$ and $L_j := [N] \setminus W_j$ for all $j$.
\end{itemize}

At time $t_1$, assign indices to $X_n$ based on their ranks among all $X_n(t_1)$. At each later time $t_j$, rank the processes in $W_{j-1}$ based on $X_n(t_j)$ and assign indexes of $1,\ldots, n_{j-1}$. Processes in $L_{j-1}$ keep their previous indices.
\end{definition}

\begin{definition}[Greedy Index System]
The \emph{greedy index system} is the temporal index system induced by the greedy selection algorithm.
\end{definition}

\begin{definition}[Alignment Map]\label{def:  map}
Given any iterative selection algorithm and a realization $\omega$ of the processes, define a bijective map $\phi$ over the space of full process paths:

Let $Y_n$ denote the image processes under $\phi$. At $t_1$, define $Y_n^{\leq t_1} := X_n^{\leq t_1}$ for all $n$.

For each $t_i$, define the increments $Y_m(t) - Y_m(t_{i-1})$ over $(t_{i-1}, t_i]$ to match $X_n(t) - X_n(t_{i-1})$ if $Y_m$'s greedy index at $t_i$ equals $X_n$'s temporal index under the original algorithm.
\end{definition}

\begin{proposition}[Measure Preservation]\label{prop:measure-preservation}
The alignment map $\phi$ is an almost-everywhere differentiable bijection with Jacobian determinant 1. Thus, it preserves the joint distribution of $\{X_i\}_{i=1}^N$ under the assumptions of i.i.d. processes with independent increments.
\end{proposition}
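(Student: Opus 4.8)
The plan is to reduce the statement to an elementary invariance property of product measures. By the hypotheses (i.i.d.\ processes with independent increments), the windowed increment paths $B^{(i)}_n:=\bigl(X_n(t)-X_n(t_{i-1})\bigr)_{t_{i-1}<t\le t_i}$, with the convention $t_0:=0$, are mutually independent over $n\in[N]$ and $i\in\{1,\dots,k\}$, and for each fixed $i$ the paths $B^{(i)}_1,\dots,B^{(i)}_N$ are i.i.d.\ with a common law $\nu_i$; hence the joint law of $(X_1,\dots,X_N)$ is the product $\bigotimes_{i=1}^{k}\nu_i^{\otimes N}$ taken over windows. Under this identification $\phi$ is the identity on window $1$ and, on each later window, replaces the tuple $\bigl(B^{(i)}_1,\dots,B^{(i)}_N\bigr)$ by $\bigl(B^{(i)}_{\pi_i(1)},\dots,B^{(i)}_{\pi_i(N)}\bigr)$ for a data-dependent permutation $\pi_i$ of $[N]$ read off from the greedy and temporal index systems. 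So the argument splits into (a) an abstract lemma on such coordinate-shuffling maps, and (b) the verification that each windowed piece of $\phi$ satisfies its hypotheses.

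For (a): if $\Psi\colon E^N\to E^N$ is a measurable bijection such that $\Psi(z)$ is a permutation of the entries of $z$ for every $z$, then $\Psi$ maps each $S_N$-orbit into itself and, being injective on a finite set, onto itself. Since the regular conditional distribution of $\mu^{\otimes N}$ given the unordered sample (equivalently, given the empirical measure) is almost surely uniform over the corresponding orbit --- the standard exchangeability/sufficiency fact, valid for any $\mu$ once coincident coordinates are accommodated by passing to orbits of the appropriate size --- $\Psi$ fixes every conditional law, hence $\Psi_\ast\mu^{\otimes N}=\mu^{\otimes N}$. This lemma is the distributional content of ``Jacobian determinant $1$'': when $\mu$ has a density $g$, the joint density $\prod_n g(z_n)$ is symmetric under coordinate permutations, so a map that is piecewise a coordinate permutation of unit Jacobian leaves it unchanged; the lemma is the version that survives when the increments admit no density.

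Step (b) is where I expect the main obstacle. On the window $(t_{i-1},t_i]$ the prescription gives $Y_m$ the window-$i$ increment path of the $X_n$ whose temporal index at $t_i$ equals the greedy index of $Y_m$ at $t_i$. But the greedy index of $Y_m$ at $t_i$ is the rank of $Y_m(t_i)$ among the surviving greedy tracks, and $Y_m(t_i)$ already incorporates the increment being assigned, so the rule is implicit: $\pi_i$ is pinned down by a consistency relation rather than an explicit formula, and the real work is to show this relation determines one and only one permutation of $[N]$ for almost every sample path --- the delicate part being the ordering of the surviving $Y$-tracks at $t_i$, which one must show admits a unique assignment compatible with the required rank identity, the exceptional tie configurations forming a Lebesgue-null set. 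Granting well-definedness, each windowed map is by construction a rearrangement of its coordinates, and --- crucially for the conditioning argument below --- remains a bijection when the data of earlier windows is frozen; I would exhibit its inverse by running the same recipe with the greedy and temporal index systems interchanged and check that the two compose to the identity window by window, giving that $\phi$ is a bijection.

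It remains to assemble the pieces. Off the countable union of the tie-sets, $\phi$ is locally a permutation of the coordinates of $\mathbb R^{N\times T}$, hence almost-everywhere differentiable with $D\phi$ a permutation matrix and $|\det D\phi|\equiv 1$ (the sign is immaterial to the conclusion). For the distributional statement I would induct on the window index: conditionally on the data of windows $1,\dots,i-1$, the window-$i$ increments are i.i.d.\ $\nu_i$ and independent of that data by independent increments, and by step (b) the window-$i$ piece of $\phi$ is --- for each frozen earlier history --- a bijection that merely shuffles those coordinates, so the lemma of (a) gives that it preserves $\nu_i^{\otimes N}$ conditionally; since the first $i-1$ windows of $\phi(X)$ are a function of the first $i-1$ windows of $X$, this propagates the product law from $i-1$ to $i$, and chaining over $i=1,\dots,k$ yields $\phi_\ast\bigl(\bigotimes_{i}\nu_i^{\otimes N}\bigr)=\bigotimes_{i}\nu_i^{\otimes N}$, i.e.\ $\phi$ preserves the joint law of $(X_1,\dots,X_N)$. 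Everything outside step (b) is routine; the crux is the well-definedness and invertibility of the windowed permutations.
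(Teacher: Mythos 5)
Your overall scheme --- the window-by-window product decomposition, the orbit/exchangeability lemma showing that a measurable bijection which merely permutes i.i.d.\ coordinates preserves the product law, and the conditional induction over windows --- is sound, and it is actually more careful than the paper's own argument, which restricts to the full-measure set where all values at the observation times are distinct, notes that $\phi$ is there locally of the form $\Phi^{-1}\sigma\Phi$ with $\Phi$ the path-to-increments linear map and $\sigma$ a coordinate permutation, and reads off the unit Jacobian; your lemma is precisely what turns ``piecewise permutation'' into law preservation when the increments admit no density, a step the paper leaves implicit. The problem is the step you yourself identify as the crux, (b): you do not prove it, and in the implicit, end-of-window form in which you set it up it is false, not just delicate. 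Reading the alignment rule as a consistency relation at $t_i$ (the $Y$ of greedy rank $r$ at $t_i$ must carry the window increment of the $X$ of temporal rank $r$ at $t_i$), the permutation is generically \emph{not} unique. Concretely, take two processes still retained entering the window, with $X_1(t_{i-1})=Y_1(t_{i-1})=0$, $X_2(t_{i-1})=Y_2(t_{i-1})=1$, a single time step in the window, and increments $10$ for $X_1$ and $0$ for $X_2$: the identity assignment gives $Y_1(t_i)=10$, $Y_2(t_i)=1$ and is rank-consistent, but the swap gives $Y_2(t_i)=11$, $Y_1(t_i)=0$ and is rank-consistent as well (the rank-$1$ track $Y_2$ indeed carries the increment of the rank-$1$ process $X_1$). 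So non-uniqueness occurs on a set of positive measure, and ``ties form a null set'' cannot rescue the well-definedness claim on which your whole assembly is conditioned.

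The repair --- and what the paper actually uses, as is visible in the appendix proof of Proposition~\ref{prop:greedy-dominance}, where the increments over $(t_{j-1},t_j]$ are equal for the process pairs formed at $t_{j-1}$ --- is to read the definition of the alignment map recursively: the permutation $\pi_i$ for the window $(t_{i-1},t_i]$ is dictated by the greedy and temporal index systems in force at $t_{i-1}$, hence is a function of the data up to $t_{i-1}$ only (the ``$t_i$'' in the definition is best read this way). Then there is no fixed-point problem at all: off the null set of ties at the observation times, $\pi_i$ is well defined (after fixing a convention pairing the eliminated indices so the assignment is a bijection of $[N]$), the inverse is obtained by exchanging the roles of the two index systems window by window, and, conditionally on the first $i-1$ windows, $\pi_i$ is a \emph{fixed} permutation, so your induction closes using only the trivial case of your lemma (a fixed permutation preserves $\nu_i^{\otimes N}$). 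With that change the rest of your write-up --- almost-everywhere differentiability, $|\det D\phi|=1$, and the chaining over windows --- goes through as written.
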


\begin{figure}[H]
    \centering
    \begin{subfigure}[b]{0.45\textwidth}
        \centering
        \includegraphics[width=\textwidth]{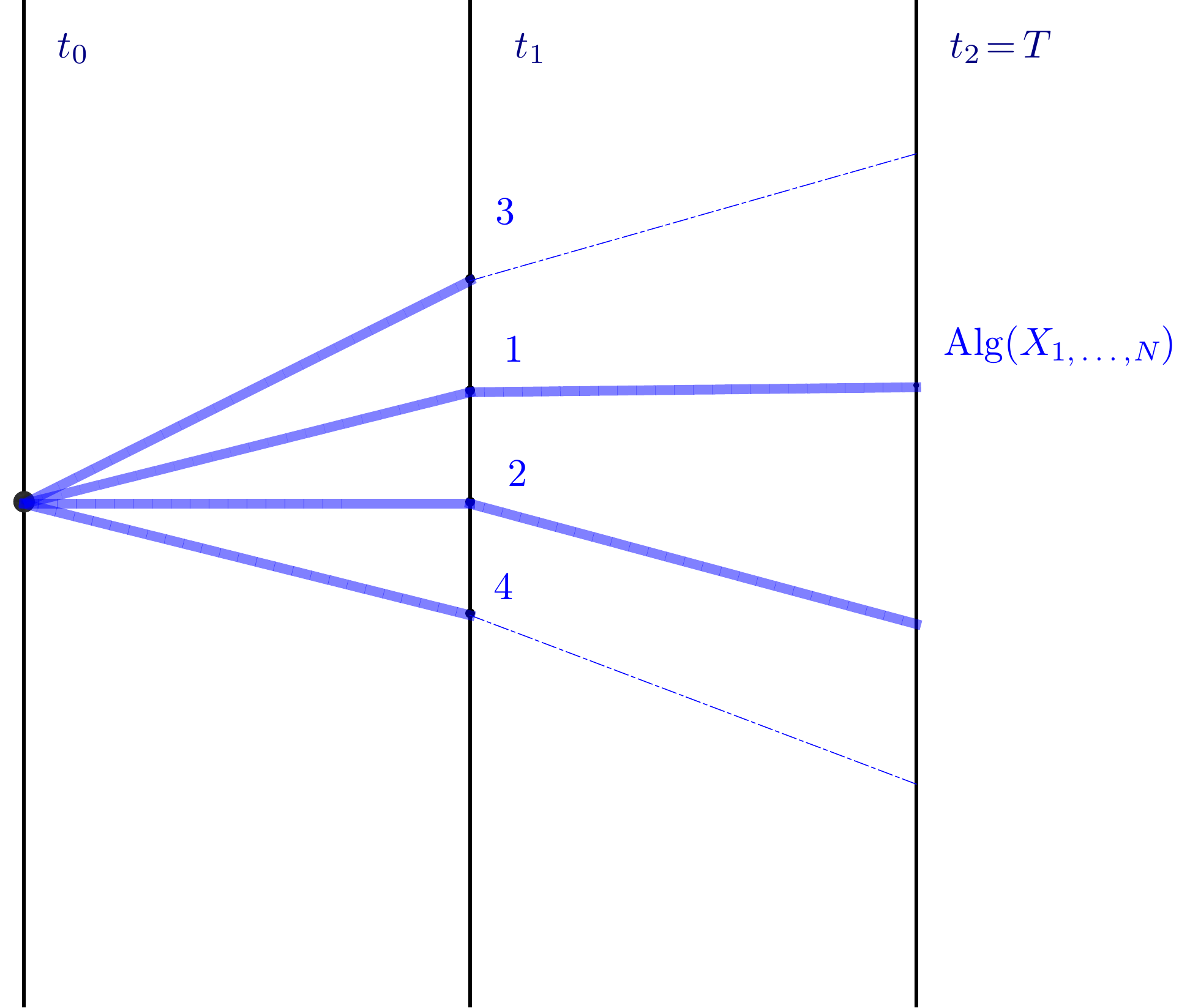}
        \caption{$X_{1,2,3,4}$}
        \label{fig:gamma}
    \end{subfigure}
    \hfill
    \begin{subfigure}[b]{0.45\textwidth}
        \centering
        \includegraphics[width=\textwidth]{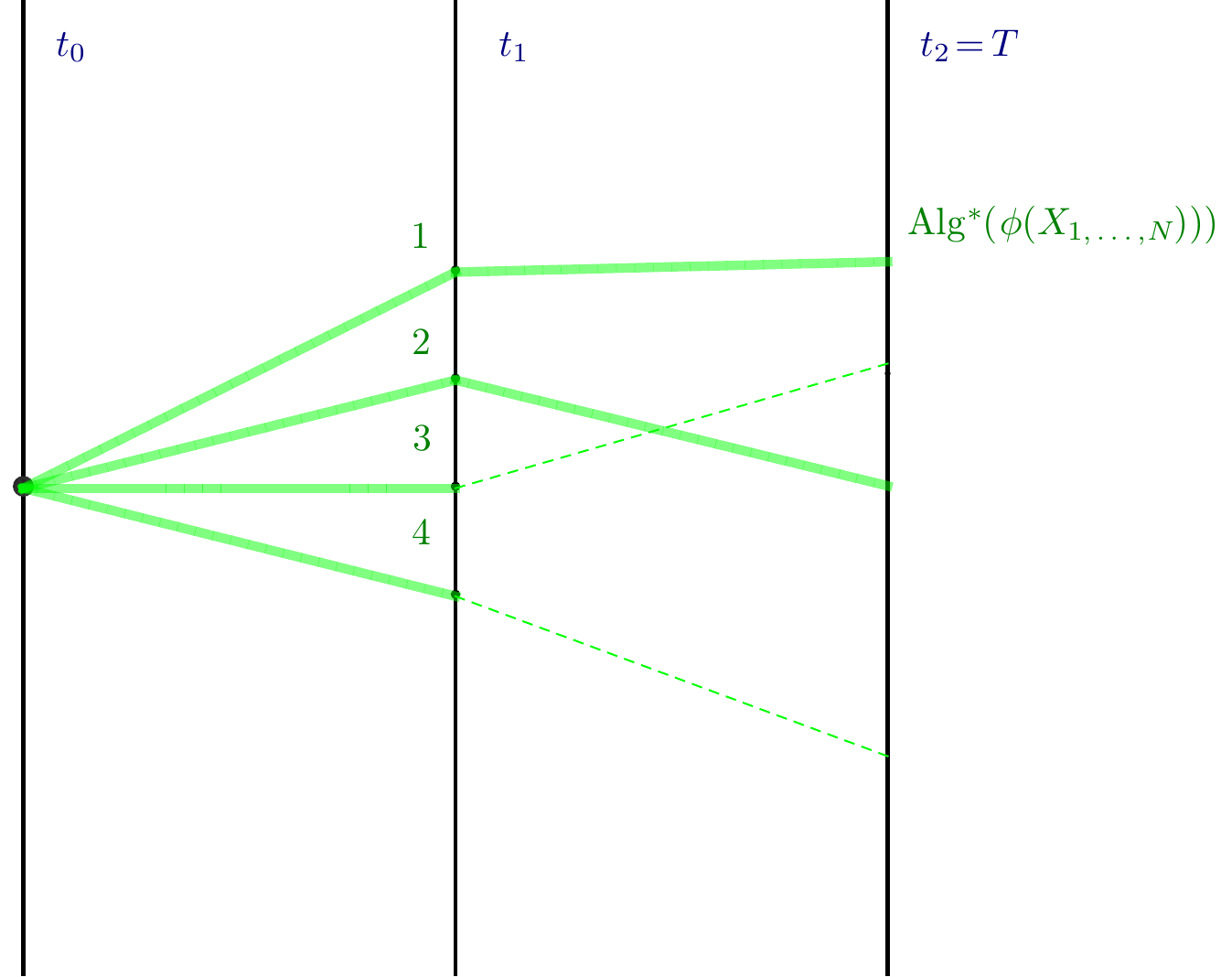}
        \caption{$Y_{1,2,3,4}$}
        \label{fig:gamma_bar}
    \end{subfigure}
    \caption{An illustrative example is provided for Proposition~\ref{prop:greedy-dominance}. Connected segments of the same color (blue or green) represent the selected processes, while dotted lines indicate rejected ones. The numbers denote the temporal indexes for $\text{Alg}$ and $\text{Alg}^*$ respectively.}
    \label{fig:appendix}
\end{figure}

\begin{proposition}[Greedy Dominance under Alignment]\label{prop:greedy-dominance}
Let $\text{Alg}$ be any iterative selection algorithm and $\text{Alg}^*$ the greedy algorithm. Then for any realization $\omega$:
\[\text{Alg}(X_1, \ldots, X_N)(\omega) \le \text{Alg}^*(\phi(X_1, \ldots, X_N))(\omega).\]
\end{proposition}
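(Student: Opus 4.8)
The plan is to fix a realization $\omega$ and argue pathwise by induction over the observation stages $j=0,1,\ldots,k$. Write $\Sigma_j$ (resp. $\Sigma_j^{*}$) for the sets selected by $\text{Alg}$ acting on $X$ (resp. by $\text{Alg}^{*}$ acting on $Y:=\phi(X_1,\ldots,X_N)$), so that $W_j=\Sigma_j$ and $W_j^{*}=\Sigma_j^{*}$ in the two index systems, and let $i_{*}\in\Sigma_k$ be the index returned by $\text{Alg}$, so $\text{Alg}(X)(\omega)=X_{i_*}(T)$; since greedy keeps the single largest value at the last stage, $\text{Alg}^{*}(Y)(\omega)=\max\{Y_m(T):m\in\Sigma_{k-1}^{*}\}$. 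I would carry the invariant that, at every observation time $t_j$, the non-increasingly sorted list of values $\{Y_m(t_j):m\in\Sigma_j^{*}\}$ dominates the sorted list $\{X_n(t_j):n\in\Sigma_j\}$ coordinatewise. Applied at $j=k$, where both lists are single numbers, this reads exactly $X_{i_*}(T)\le\text{Alg}^{*}(Y)(\omega)$, which is the assertion.

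Two ingredients drive the induction. First, a \emph{transport identity} read off from Definition~\ref{def:  map}: over each window $(t_{j-1},t_j]$ the increments of the processes in $W_{j-1}$ are redistributed onto the processes of $W_{j-1}^{*}$ by matching temporal index at $t_j$ with greedy index at $t_j$, so the member of $W_{j-1}^{*}$ whose $Y$-value has rank $\ell$ at $t_j$ receives precisely the increment carried by the member of $W_{j-1}$ whose $X$-value has rank $\ell$ at $t_j$; consequently the quantity ``$Y$-value at $t_j$ minus matched $X$-value at $t_j$'' equals the same difference at $t_{j-1}$. Second, the defining property of greedy: $\Sigma_j^{*}$ is the top-$n_j$ subset of $W_{j-1}^{*}$ by value at $t_j$, whereas $\Sigma_j$ is merely \emph{some} $n_j$-subset of $W_{j-1}$. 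The base case $j=1$ is immediate, since $\phi$ is the identity on paths up to $t_1$, hence $Y_n(t_1)=X_n(t_1)$ for all $n$, and the $n_1$ largest entries of a list dominate any $n_1$ of its entries after sorting. The inductive step pushes the invariant from $t_{j-1}$ to $t_j$ by (i) adding, via the transport identity, the common window increments in the rank-coupled order, and then (ii) truncating the $Y$-side to its top $n_j$ values (greedy) while the $X$-side is truncated to an arbitrary $n_j$ values (Alg).

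The step I expect to be the main obstacle is exactly (i): a rearrangement/majorization lemma to the effect that if a sorted vector $u$ dominates a sorted vector $v$ coordinatewise, and a common multiset of increments is added to both \emph{coupled through $\phi$'s rank matching} — the $\ell$-th largest resulting $u$-entry and the $\ell$-th largest resulting $v$-entry sharing an increment — then the new sorted $u$-vector still dominates the new sorted $v$-vector; step (ii) then follows by a trivial prefix-and-subset comparison. Making the coupled-addition lemma precise forces one to pin down how the alignment resolves rank ties, and this is where the i.i.d./independent-increment hypotheses are used: under them the ranks that define $\phi$ are unambiguous for $\omega$ outside a null set, so the coupling — and hence the lemma — is well posed. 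The remaining bookkeeping (the index conventions for the rejected processes $L_j$, and the collapse to a single coordinate at $j=k$) is routine once that lemma is in hand.
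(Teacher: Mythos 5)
Your overall strategy---a pathwise induction over stages with a sorted-dominance invariant, an increment-transport step, and the top-$n_j$-versus-arbitrary-subset comparison---is the same as the paper's. The problem is the step you yourself flag as the crux, the ``coupled-addition'' lemma, and the issue is not just tie-breaking. You couple the window increments through the ranks at the \emph{end} of the window: the greedy-pool member that ends up at rank $\ell$ at $t_j$ is declared to share the increment of the Alg-pool member that ends up at rank $\ell$ at $t_j$. That coupling is circular (the $t_j$-ranks on the $Y$-side depend on which increments you assign, so almost-sure distinctness of values does not make it well posed; several mutually incompatible resolutions can coexist), and the lemma is false for some consistent resolutions. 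Concretely, with two survivors per side, let the greedy pool have values $10$ and $9$ at $t_{j-1}$ and the Alg pool have values $10$ and $0$, so sorted dominance holds; over the window the Alg-value $10$ gains $+5$ (ending at $15$, rank $1$) and the Alg-value $0$ gains $0$ (rank $2$). The assignment in which the greedy value $9$ receives $+5$ (ending at $14$, rank $1$) and the greedy value $10$ receives $0$ (rank $2$) satisfies your rank-at-$t_j$ matching, yet the new sorted lists are $(14,10)$ versus $(15,0)$ and dominance fails at rank $1$; if Alg then keeps its top process it strictly beats greedy on the image paths, so no amount of bookkeeping closes the induction from this lemma.

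The fix---and what the paper's proof actually does, despite the wording of the alignment-map definition---is to couple by ranks at the \emph{start} of the window: pair the $\ell$-th largest member at $t_{j-1}$ of greedy's retained set with the $\ell$-th largest member at $t_{j-1}$ of Alg's retained set, and let each pair share its increment over $(t_{j-1},t_j]$. This coupling is non-circular; the inductive hypothesis together with the top-versus-arbitrary-subset comparison gives $Y\ge X$ within each pair at $t_{j-1}$; adding the shared increment preserves the inequality pair by pair at $t_j$; and the re-sorting at $t_j$ is exactly the paper's elementary order-statistics lemma (if $A_i\le B_i$ coordinatewise, then every order statistic of $A+C$ is dominated by the corresponding order statistic of $B+C$), after which greedy's truncation to its top $n_j$ versus Alg's arbitrary truncation restores the invariant. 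With that replacement your induction closes. Note also that the i.i.d.\ and independent-increment assumptions play no role in this purely pathwise proposition---they are needed only for the measure-preservation statement---so invoking them to legitimize the coupling is a sign the coupling was anchored at the wrong time point.
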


\section{Main Result}

\begin{theorem}
For i.i.d. discrete-time processes with independent increments, the greedy selection algorithm maximizes the expected value of the final selected process:
\[\mathbb{E}[\text{Alg}(X_1, \ldots, X_N)] \le \mathbb{E}[\text{Alg}^*(X_1, \ldots, X_N)].\]
\end{theorem}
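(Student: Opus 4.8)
The plan is to obtain the theorem by combining the two propositions already established: use the pathwise inequality of Proposition~\ref{prop:greedy-dominance} and then integrate, invoking Proposition~\ref{prop:measure-preservation} to trade the pushed-forward paths for the original ones. So, fixing an arbitrary iterative selection algorithm $\text{Alg}$ with greedy counterpart $\text{Alg}^*$, Proposition~\ref{prop:greedy-dominance} gives, for every realization $\omega$,
\[
\text{Alg}(X_1,\ldots,X_N)(\omega) \;\le\; \text{Alg}^*\big(\phi(X_1,\ldots,X_N)\big)(\omega).
\]
Under the mild integrability hypothesis $\mathbb{E}\big[\max_{i\in[N]}|X_i(T)|\big]<\infty$ (which makes every expectation below well defined), monotonicity of expectation yields $\mathbb{E}[\text{Alg}(X_1,\ldots,X_N)] \le \mathbb{E}[\text{Alg}^*(\phi(X_1,\ldots,X_N))]$.

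The second step is to identify the right-hand side with $\mathbb{E}[\text{Alg}^*(X_1,\ldots,X_N)]$. By Proposition~\ref{prop:measure-preservation}, $\phi$ is an almost-everywhere differentiable bijection of path space with unit Jacobian determinant, hence $\phi_*\mu=\mu$ where $\mu$ is the joint law of $(X_1,\ldots,X_N)$. Since $\text{Alg}^*$ is a fixed measurable functional of the path, $\text{Alg}^*(\phi(X_1,\ldots,X_N))$ and $\text{Alg}^*(X_1,\ldots,X_N)$ have the same distribution, and therefore the same expectation. Substituting this equality into the inequality from the first step gives
\[
\mathbb{E}\big[\text{Alg}(X_1,\ldots,X_N)\big] \;\le\; \mathbb{E}\big[\text{Alg}^*(X_1,\ldots,X_N)\big],
\]
and since $\text{Alg}$ was arbitrary, the greedy algorithm is optimal, as claimed.

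The step I expect to need the most care is the bookkeeping underlying the application of Proposition~\ref{prop:measure-preservation}: the map $\phi$ is itself realization-dependent, since it is built from the selection sets $\Sigma_j$ and from the temporal index system associated to the given $\text{Alg}$. One must therefore be explicit that $\phi$ is a single, globally defined bijection of the whole path space — defined piecewise over the finitely many combinatorial "selection patterns", with each piece a distribution-preserving re-labelling of the independent increment blocks on $(t_{j-1},t_j]$ — and that the gluing of these pieces is measurable, the interfaces (where ranks tie or selection is ambiguous) forming a $\mu$-null set because the increment laws are such that ties occur with probability zero. Granting Proposition~\ref{prop:measure-preservation} as stated, exactly this is what licenses the change of variables; everything else is the routine monotonicity of expectation and the definition of pushforward, so no further obstacle remains.
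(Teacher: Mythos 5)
Your proposal is correct and follows essentially the same route as the paper: the pathwise inequality from Proposition~\ref{prop:greedy-dominance}, monotonicity of expectation, and then Proposition~\ref{prop:measure-preservation} with a change of variables to replace $\mathbb{E}[\text{Alg}^*(\phi(X))]$ by $\mathbb{E}[\text{Alg}^*(X)]$. Your added remarks on integrability and on the measurable, piecewise definition of $\phi$ are sensible refinements but do not change the argument.
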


\begin{proof}
Let $\phi$ be the alignment map. We prove the statement via:
\[\mathbb{E}[\text{Alg}(X)] \le \mathbb{E}[\text{Alg}^*(\phi(X))] = \mathbb{E}[\text{Alg}^*(X)].\]
where the first inequality follows from Proposition~\ref{prop:greedy-dominance} and the last equality follows from the change of variable theorem and Proposition~\ref{prop:measure-preservation}.
\end{proof}

\section{Discussion}

Our main result shows that under the assumption of independent increments, the optimal iterative selection strategy is greedy—at each stage, simply retaining the top-valued processes based on their current observations suffices. Notably, the optimal algorithm does not exploit any past observations beyond the current timestep. This striking simplicity arises from the strong structural assumption of independent increments: the future evolution of each process is conditionally independent of its past, given its present value.

If we weaken this assumption—for example, allowing for temporal dependencies or non-stationary dynamics—the optimal selection strategy will, in general, require incorporating historical observations. In such settings, an ideal algorithm would compute posterior estimates of future values using both the observed history and some probabilistic prior over the process dynamics. This would depart significantly from the greedy structure and may involve Bayesian inference or learning-based prediction.

This perspective also suggests an intriguing connection to recent trends in large language model (LLM) alignment. If a reward model guiding generation is trained in such a way that its partial outputs behave like processes with approximately independent increments—e.g., scores evolve in a memoryless or marginally stable fashion—then selection heuristics such as speculative decoding and speculative rejection can be expected to perform well. In particular, speculative rejection operates by iteratively pruning candidate outputs using partial reward signals, which parallels the greedy algorithm described here. Understanding when such reward models induce effective independence properties may shed light on why these heuristics work and how to improve them.

\bibliographystyle{plain}
\bibliography{refs}

\appendix
\section{Omitted Proofs and Comments}

\subsection{Proof for Proposition~\ref{prop:measure-preservation}}
Consider a subspace of the product space of $N$ paths where all processes have distinct values at $t_j$, which is $0$-measure different from the whole path space. Each point on this space has a neighborhood where all rankings are preserved and the alignment is locally a linear permutation map. Therefore the map is differentiable on this subspace. Same arguments work for the inverse. Thus we prove the statement about differentiability. 

Denote the linear transformation from product of $N$ paths to its corresponding increments as $\Phi$, or specifically:
\[
\Phi: \{X_{n}(t)\}_{n \in [N], t\in [0, T]} \mapsto \bigcup_j \{X_{n}(t) - X_n(t_{(j-1)}))\}_{n \in [N], t\in [t_{(j-1)}, t_j)} 
\]
The defined alignment map locally operates a permutation $\sigma$ on the image space of $\Phi$, thus with determinant 1. Thus the overall jacobian is:
\[
\det (\Phi^{-1} \sigma \Phi) =\det (\sigma) = 1
\]

\subsection{Proof for Proposition~\ref{prop:greedy-dominance}}
We prove a stronger version of proposition~\ref{prop:greedy-dominance} as follows:
\begin{proposition}

    If we denote $Y_{1,\ldots, N} = \phi(X_{1,\ldots, N})$, then at each $t_j$, process pair $(X_n, Y_m)$ such that $Y_m$ has the greedy index same as $X_n$'s temporal index, $Y_m(t_j)\geq X_n(t_j)$.
\end{proposition}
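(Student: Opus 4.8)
The plan is to prove the stronger statement by induction on the stage index $j$, tracking the invariant that the greedy index system always ``dominates'' the temporal index system of $\text{Alg}$ pointwise. Concretely, I would maintain the following invariant at each observation time $t_j$: if $Y_m$ carries greedy index $r$ and $X_n$ carries temporal index $r$ (the same rank $r$), then $Y_m(t_j) \ge X_n(t_j)$; and moreover the greedy survivor set $\Sigma_j^*$ (as a set of indices $r$) contains the $\text{Alg}$-survivor set $\Sigma_j$ — that is, every rank retained by $\text{Alg}$ is also retained by greedy, which makes sense because $n_j$ is the same for both and greedy keeps the \emph{top} $n_j$.

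First, for the base case at $t_1$: by Definition~\ref{def:  map} we have $Y_n^{\le t_1} = X_n^{\le t_1}$, so the two index systems are literally identical at $t_1$ and the inequality is an equality; the set inclusion is immediate since both assign ranks $1,\dots,N$ by the same values and $\Sigma_1$ is \emph{some} $n_1$-subset while $\Sigma_1^*$ is the top $n_1$. For the inductive step, suppose the invariant holds at $t_{j-1}$. By the definition of $\phi$, the increment of $Y_m$ over $(t_{j-1}, t_j]$ is copied from the increment of $X_n$ over the same interval, where $n$ is the process whose \emph{temporal} index equals $Y_m$'s \emph{greedy} index at $t_j$. The subtlety is that the matching is defined using indices at $t_j$, not $t_{j-1}$, so I first need to argue the matching is consistent: the increments assigned over $(t_{j-1}, t_j]$ determine the values at $t_j$, which in turn determine the re-ranking — so this must be set up as a fixed-point / well-ordering argument, or (cleaner) one observes the increments are assigned to the \emph{surviving} processes in a rank-preserving way relative to $t_{j-1}$ values, and only the labels are permuted. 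I would spell out that on $W_{j-1}$ the alignment permutes which increment-bundle goes to which process, and then the $t_j$-ranking is read off.

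The core of the inductive step is then a rearrangement/exchange argument. At $t_{j-1}$, by induction, pairing rank-by-rank gives $Y$-values $\ge$ the corresponding $X$-values, \emph{and} greedy retains a superset of the ranks that $\text{Alg}$ retains. Now over $(t_{j-1}, t_j]$, the same multiset of increments (restricted to whatever processes are being compared) is redistributed; the greedy side gets to attach these increments to the processes it retained, and because it retained the top-$n_{j-1}$ by value at $t_{j-1}$ while also starting from coordinatewise-larger values, I want to show that after adding increments and re-sorting, the greedy top-$n_j$ values dominate the $\text{Alg}$ top-$n_j$ values rank-by-rank. The clean way to see this: sorting is monotone, i.e. if two sequences satisfy $a_{(r)} \ge b_{(r)}$ after sorting in decreasing order, and we add the \emph{same} multiset of increments but are free to pair increments with entries on the $a$-side to keep things sorted while the $b$-side pairing is forced, then the sorted sums still satisfy $\tilde a_{(r)} \ge \tilde b_{(r)}$; this is essentially the statement that the greedy algorithm for pairing maximizes each order statistic, combined with the fact that $\text{Alg}$'s pairing is one particular (hence no better) pairing. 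Taking $r$ over the top $n_j$ ranks closes the invariant, and then restricting to $r$ such that $X_n$ is the final $\text{Alg}$-output (necessarily $r$ is retained by greedy) and pushing to $t_k = T$ gives $\text{Alg}(X)(\omega) \le \text{Alg}^*(\phi(X))(\omega)$.

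The main obstacle I anticipate is making the re-ranking/matching at $t_j$ rigorous, since $\phi$ is defined self-referentially (the increment assigned to $Y_m$ depends on $Y_m$'s greedy index at $t_j$, which depends on the increment). I expect this is resolved by observing that on the surviving set $W_{j-1}$ the map is, conditional on the history up to $t_{j-1}$, just a relabeling that sorts increment-bundles against value-ranks, so there is no genuine circularity — but writing this carefully, and simultaneously handling the ``losers'' in $L_{j-1}$ (whose indices are frozen and whose values are irrelevant to the final comparison because they are already eliminated on both sides by the superset property), is where the bookkeeping is heaviest. The rearrangement inequality itself is standard and I would not belabor it; the figure (Figure~\ref{fig:appendix}) is meant to carry intuition for why the colored surviving segments on the $Y$-side always sit above the corresponding segments on the $X$-side.
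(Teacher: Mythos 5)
Your overall strategy coincides with the paper's: induction over the stages, with the increments of $\text{Alg}$'s survivors handed to the greedy survivors along a rank-matched pairing, and the inductive step closed by the fact that adding a common, identically-paired increment vector to two componentwise-ordered value vectors preserves dominance of every order statistic --- this is exactly the paper's auxiliary lemma, which your ``sorting is monotone'' step reproduces. Your worry about the apparent circularity in the definition of the alignment map is also legitimate: the paper's appendix resolves it only implicitly, in essentially the way you propose (on the surviving set the map merely permutes which increment bundle is attached to which process, in a rank-preserving way relative to the values at $t_{j-1}$), so making that explicit is an improvement rather than a detour.

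There is, however, one genuinely false ingredient in your invariant: the claim that the greedy survivor set $\Sigma_j^*$, viewed as a set of ranks, contains the $\text{Alg}$-survivor set $\Sigma_j$. Both sets have cardinality $n_j$, so containment would force them to be equal; but $\text{Alg}$ may retain \emph{any} $n_j$-subset of the ranks $1,\dots,n_{j-1}$ (for instance the bottom $n_j$), while greedy retains exactly the top $n_j$, so the inclusion fails whenever $\text{Alg}$ keeps a non-top rank, and your justification (``$n_j$ is the same for both and greedy keeps the top $n_j$'') does not deliver it. Fortunately nothing in the argument needs this claim. The losers are irrelevant simply because both algorithms output values of processes that survived every stage, and the rank-by-rank dominance among survivors is all that is used; and at the final stage you should not argue that the rank of $\text{Alg}$'s output ``is necessarily retained by greedy'' (greedy retains only rank $1$ there), but merely that $\text{Alg}$'s output is at most the rank-$1$ value among its own survivors at $T$, which by the established dominance is at most the rank-$1$ greedy value, i.e.\ $\text{Alg}^*(\phi(X))(\omega)$. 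With the containment claim and the final parenthetical deleted, and that one-line bound inserted, your plan is the paper's proof.
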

We prove the above proposition through induction. 

At initialization, the process pairs are $(X_n, Y_n)$. $X_n(t_1) = Y_n(t_1)$ by definition~\ref{def: map}. 

At $t_j$ with $j>1$, assume that the statement is true for previous rounds. Denote $\Sigma_{j-1}$ and $\Sigma^*_{j-1}$ the selected processes' index set at $t_{(j-1)}$ (by $\text{Alg}$ and $\text{Alg}^*$). Note that for each process pair $(X_n, Y_m)$ at $t_{j-1}$:
\[
X_n(t_j) - X_n(t_{(j-1)}) = Y_m(t_j) - Y_m(t_{(j-1)}). 
\]
Thus $X_n(t_j) \geq Y_m(t_j)$. Although $(X_n, Y_m)$ may no longer be a process pair at $t_j$, we still deduce the statement because the highest $n_j$ from $Y_m(t_{(j-1)})$s must be higher than the highest $n_j$ from $X_n(t_{(j-1)})$s accordingly. This is an example of the following lemma:
\begin{lemma}
For three vectors $A, B, C\in \mathbb{R}^k$ with $A_i \leq B_i$, then the $m$-th order statistics satisfies: 
\[
(A + C)(m) \leq (B + C)(m)
\]
for any $m \in [k]$.
\end{lemma}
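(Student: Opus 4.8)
The plan is to first dispose of the vector $C$: setting $v := A + C$ and $w := B + C$, the hypothesis $A_i \le B_i$ for all $i$ gives $v_i \le w_i$ for all $i$, since addition of a fixed vector acts coordinatewise. So the claim reduces to the pure monotonicity statement that coordinatewise domination $v \le w$ implies $v_{(m)} \le w_{(m)}$ for every $m \in [k]$, where $(\cdot)_{(m)}$ denotes the $m$-th largest entry. In other words $C$ is a red herring; the real content is that each order statistic is a monotone function of the underlying vector.

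For the reduced statement I would use the counting characterization of order statistics: for any $c \in \mathbb{R}$, one has $v_{(m)} \ge c$ if and only if $|\{ i : v_i \ge c \}| \ge m$. Apply this with $c = v_{(m)}$: the set $T := \{ i : v_i \ge v_{(m)} \}$ has $|T| \ge m$, and since $w_i \ge v_i \ge v_{(m)}$ for every $i \in T$ we get $T \subseteq \{ i : w_i \ge v_{(m)} \}$, hence $|\{ i : w_i \ge v_{(m)} \}| \ge m$, whence $w_{(m)} \ge v_{(m)} = v_{(m)}$. This argument is uniform in $m$ and insensitive to ties, and the same reasoning applied to the complements handles the $m$-th smallest convention if that is the one intended.

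An alternative I might present instead, closer in spirit to the inductive style of the preceding proofs, is a one-coordinate-at-a-time coupling: obtain $w$ from $v$ by raising the coordinates one by one; raising a single entry of a vector can only weakly increase each of its sorted values, because inserting a larger number into a sorted list pushes every entry weakly upward. Iterating over the at most $k$ coordinates that change yields $v_{(m)} \le w_{(m)}$.

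I do not anticipate a genuine obstacle here — the lemma is elementary once $C$ is absorbed. The only points requiring care are fixing the convention for ``$m$-th order statistic'' (largest versus smallest) and stating the counting characterization consistently with it, and making explicit that the reduction via $v = A+C$, $w = B+C$ is legitimate. I would therefore present the counting-characterization proof as the main one, it being the shortest and most robust to ties.
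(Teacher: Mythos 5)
Your proposal is correct and is essentially the paper's argument: the paper also absorbs $C$ implicitly and runs the same counting argument on the set $J=\{i:(A+C)_i\geq (A+C)(m)\}$, merely phrased as a proof by contradiction rather than your direct use of the counting characterization. No substantive difference beyond presentation (direct versus contrapositive), so nothing further is needed.
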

\begin{proof}
    Suppose that there is $m$ such that $(A + C)(m) > (B + C)(m)$. Denote $J$ as the following index set:
    \[
    J = \{ i \in [k]; (A + C)_j \geq (A + C)(m)\},  
    \]
    then $|J| \geq m$.  And for each $i\in J$, 
    \[
    (B + C)(m)<(A + C)(m) \leq (A + C)_i  \leq (B + C)_i
    \]
    thus there is $|J|\geq m$ elements of $(B + C)$ strictly greater than $(B + C)(m)$, which yields a contradiction. 
\end{proof}

Now since the stronger proposition holds, we have proved the original proposition. 
\end{document}